\newcommand{\con}{\boldsymbol{\nabla}}
\newcommand{\curv}{\boldsymbol{\theta}}
\providecommand{\CC}{{\mathbb{C}}}
\providecommand{\RR}{{\mathbb{R}}}
\providecommand{\ZZ}{{\mathbb{Z}}}
\providecommand{\HH}{{\mathcal H}}
\providecommand{\mcB}{{\mathcal{B}}}
\providecommand{\mcC}{{\mathcal{C}}}
\providecommand{\mcE}{{\mathcal{E}}}
\providecommand{\mcS}{{\mathcal{S}}}
\providecommand{\mcW}{{\mathcal{W}}}
\providecommand{\msL}{{\mathscr{L}}}
\providecommand{\Sch}{{\mathcal{S}}}
\providecommand{\Symb}{{\mathscr{S}}}
\providecommand{\Sp}{{\mathrm{Sp}}}
\newcommand{\ang}[1]{\langle #1 \rangle} % <...>
\newcommand{\Ext}{\mathscr{E}}
\DeclareMathOperator{\Hom}{Hom}
\DeclareMathOperator{\End}{End}
\DeclareMathOperator{\Ch}{Ch}
\DeclareMathOperator{\Tch}{Tch}
\DeclareMathOperator{\Tr}{Tr}
\DeclareMathOperator{\tr}{tr}
\DeclareMathOperator{\re}{Re}
\DeclareMathOperator{\Res}{Res}
\DeclareMathOperator{\Ker}{Ker}
\DeclareMathOperator{\ind }{Index}
\DeclareMathOperator{\Op}{Op}
\DeclareMathOperator{\Trh}{{\overline{\mathrm{Tr}}}}
\newcommand{\ho}{\mathcal{H}}
\newcommand{\iso}{\mu}
\newtheorem{theorem}{Theorem}[section]
\newtheorem{lemma}[theorem]{Lemma}
\newtheorem{proposition}[theorem]{Proposition}
\theoremstyle{definition}
\newtheorem{definition}[theorem]{Definition}
\theoremstyle{remark}
\numberwithin{equation}{section}
\title[Cyclic cohomology and the extended Heisenberg calculus]{Cyclic cohomology and the extended Heisenberg calculus of Epstein and Melrose }
\author{Alexander Gorokhovsky}
\address{University of Colorado, Boulder, Campus Box 395, Boulder, Colorado, 80309, USA}
\email{gorokhov@colorado.edu}
\author{Erik van Erp}
\address{Dartmouth College, 6188, Kemeny Hall, Hanover, New Hampshire, 03755, USA}
\email{jhamvanerp@gmail.com}
\subjclass[2010]{46L80, 58J20, 58J40}
\begin{document}

\begin{abstract}
In this paper we present a formula for the index of a pseudodifferential operator with invertible principal symbol in the {\em extended} Heisenberg calculus of Epstein and Melrose \cites{Me97, Ep04, EMxx}. Our results  build on the work we did in \cite{GvE22}, where we restricted attention to the Heisenberg calculus proper.
\end{abstract}
\maketitle
\setcounter{tocdepth}{1}
\tableofcontents

\section{Introduction}
In this paper we present a formula for the index of a pseudodifferential operator with invertible principal symbol in the {\em extended} Heisenberg calculus of Epstein and Melrose \cites{Me97, Ep04, EMxx}.

The extended Heisenberg calculus includes, as subalgebras, the algebra of Heisenberg pseudodifferential operators {\em as well as} the algebra of classical pseudodifferential operators.
Thus, an index formula for the extended Heisenberg calculus is a common generalization of the  Atiyah-Singer index formula for elliptic operators on the one hand, and the index theorem for Heisenberg elliptic operators on the other hand.
The monograph \cite{EMxx} of Epstein and Melrose aimed to find such a formula, but the project was  unfinished.
Our results here build on the work we did in \cite{GvE22}, where we restricted attention to the Heisenberg calculus proper.
Note that it is a non-trivial exercise to derive the Atiyah-Singer formula in its standard form from our formula.

A hypoelliptic  operator of order zero in the extended Heisenberg calculus on a compact contact manifold $M$ is a  bounded Fredholm operator.
Its symbol  determines an element  in the $K$-theory of the noncommutative algebra of Heisenberg symbols $\Symb_{eH}^{0,0}$.
We construct a character homomorphism
\[ \chi\circ s\colon K_1(\Symb_{eH}^{0,0})\to H^{odd}(M)\]
which assigns a closed differential form to an invertible principal symbol (of order zero).
As in our previous paper \cite{GvE22},
the construction is heavily based on ideas and constructions from cyclic theory.

We show that if $\msL$ is a pseudodifferential operator of  order zero in the extended Heisenberg calculus with invertible principal symbol $\sigma=\sigma_{eH}(\msL)$,
then \begin{equation*} \ind  \msL = \int_M \chi( s(\sigma))\wedge \hat{A}(M).\end{equation*}
(See Theorem \ref{main_theorem}.)

The paper is organized as follows.
In section \ref{symbols} we describe the algebra of principal symbols in the extended Heisenberg calculus and relate them to sections of bundles of Weyl algebras.
In section \ref{symplectic bundle} we describe connections on a bundle of Weyl algebras,
and section \ref{trace} describes a  regularized trace on the fibers of such bundles.
Section \ref{cyclic section} gives a review of cyclic theory and the Chern character in this context.
In section \ref{extension} we describe an extension of the algebra of principal symbols in the extended Heisenberg calculus which is well suited for  cyclic theory.
Section \ref{charmap} contains the description of our characteristic map $\chi$. Finally in the section \ref{formula section} we state and prove the main result of the paper -- the index formula for  order zero hypoelliptic operators in the extended Heisenberg calculus.

\section{Symbols in the extended Heisenberg calculus}\label{symbols}
In this section we  briefly review the structure of the algebra of Heisenberg and extended Heisenberg principal symbols.
We assume the reader has some   familiarity with the Heisenberg calculus \cites{BG88, Ta84}.
We follow the treatment of  Epstein and Melrose \cites{EMxx, Me97}.
This section serves to  fix our notation.
See also \cite{GvE22} for a more detailed discussion  of some of the material in this section.

\subsection{Contact manifolds}
Throughout this paper, $M$ is a smooth closed orientable  manifold of dimension $2n+1$.
We assume that $M$ is equipped with a contact 1-form, i.e. a 1-form $\alpha$ such that $\alpha(d\alpha)^n$ is a nowhere vanishing volume form. Such an $M$ will be called a \emph{contact manifold}. Note that $M$ is oriented by the form  $\alpha(d\alpha)^n$.

The Reeb field $T$ is the vector field on $M$ determined by $d\alpha(T,\,-\,) = 0$ and $\alpha(T)=1$.
We denote by $H\subset TM$  the  bundle of tangent vectors that are annihilated by $\alpha$.
Then  $TM\cong H\oplus \underline{\RR}$, where $\underline{\RR}=M\times \RR$ is the trivial line bundle.

The restriction of the 2-form $\omega:=-d\alpha$ to each fiber $H_p, p\in M$, is a symplectic form,
$\omega_p:=-d\alpha|H_p$.
We assume that a compatible complex structure $J\in \End(H)$ has been chosen, i.e. an endomorphism of $H$ with $J^2=-\mathrm{Id}$, $\omega_p(Jv, Jw)=\omega_p(v,w)$, $\omega_p(Jv,v)\ge 0$.
The combination of the symplectic and complex structures make $H$ into a complex Hermitian vector bundle, which we denote by $H^{1,0}$.
We shall reserve the notation $H$ for the real vector bundle.
We denote by  $P_H$  the $U(n)$ principal bundle of orthonormal frames in $H^{1,0}$.
$P_H$ is equipped with a right action of $U(n)$. The map $v \mapsto \omega(v, \cdot)$  establishes a canonical isomorphism of bundles $H$ and $H^*$, allowing to transfer to $H^*$ complex and Hermitian structures. In particular, we will use it to identify the bundle of orthonormal frames in $(H^*)^{1,0}$  with $P_H$.

Examples of contact manifolds are: the Heisenberg group, boundaries of strictly pseudoconvex domains (e.g. the odd-dimensional unit sphere in $\CC^n$), the cosphere bundle of a manifold.

\subsection{The Weyl algebra}
Let $V$ be a $2n$-dimensional real vector space with symplectic form $\omega$.
The Weyl algebra $\mcW(V,\omega)$
consists of smooth complex-valued functions $a\in C^\infty(V,\CC)$ with properties:
\begin{itemize}
    \item There is an integer $m\in \ZZ$ such that for every  multi-index $\alpha$ there is $C_{\alpha}>0$ with
\begin{equation}\label{seminorms1}
|(\partial^\alpha_v a)(v)| \le C_{\alpha} (1+\|v\|^2)^{\frac{1}{2}(m-|\alpha|)}\qquad \text{for all}\;v\in V.
\end{equation}
The integer $m$ is the Weyl order of  $a$.
\item $a$ has a 1-step polyhomogeneous asymptotic expansion
\begin{equation}\label{asymptotic expansion}
a\sim \sum_{j=-m}^\infty a_j\qquad a_j(sv)=s^{-j}a_j(v) \quad s>0, v\ne 0. \end{equation}
\end{itemize}
The product of two symbols $a, b\in \mcW(V,\omega)$ is
\begin{equation*}  (a \# b)(v) = \frac{1}{(2\pi)^{2n}}\iint e^{2i\omega(x,y)} a(v+x)b(v+y) \,dxdy,
\end{equation*}
where $dx=dy=|\omega^n|/n!$.
The Weyl algebra is $\ZZ$-filtered by the Weyl order. We denote by $\mcW^m$ the space of elements of order $m$,
\begin{equation*} \cdots\subset \mcW^{-2}\subset \mcW^{-1}\subset \mcW^{0}\subset \mcW^{1}\subset \mcW^{2}\subset \cdots\end{equation*}
Schwartz class functions $\mcS(V,\omega)$ form a two-sided  ideal in  $\mcW(V, \omega)$.
Denote the quotient algebra by $\mcB(V, \omega)=\mcW(V, \omega)/\mcS(V,\omega)$,
with quotient map
\begin{equation*} \lambda:\mcW(V, \omega)\to \mcB(V, \omega).
\end{equation*}
Elements of $\mcB(V,\omega)$ can be identified with formal series $\sum_{l=-m}^\infty a_{l}$.
The product $a\star b=c$  of formal series $a= \sum a_{l}$ and $b= \sum b_{m}$ in $\mcB$  is given by the formal series
\begin{equation*}a \star b =  \sum  c_p,
\end{equation*}
where $c_p$ is homogeneous  of degree $-p$, and given by the finite sum
\begin{equation*}c_{p} := \sum_{2k+l+m=p}  B_k(a_{l},b_{m}),
\end{equation*}
\begin{equation*} B_k(f, g)=\left(\frac{i}{2}\right)^k \sum_{|\alpha|+|\beta|=k}\frac{1}{\alpha!\beta!}(-1)^{|\beta|}(\partial^\alpha_x\partial^\beta_\xi f)(\partial^\beta_x\partial^\alpha_\xi g), \end{equation*}
where $x$, $\xi$ are symplectic (Darboux) coordinates on $V$.
We denote by  $\mcB_q(V,\omega)$ the subset of $\mcB(V,\omega)$ consisting of formal series
$a=\sum_{l=-m}^\infty a_{2l}$ for which  all the terms $a_{2l}$ are homogeneous of even degree $-2l$.
$\mcB_q(V,\omega)$ is a subalgebra of $\mcB(V,\omega)$.

\begin{definition}
Let $\iota \colon \mcB_q(V,\omega)\to\mcB_q(V,\omega)$ be the linear map
\begin{equation*}
a=\sum_{j=-m}^\infty a_{2j}\in \mcB_q(V,\omega) \qquad \iota(a):= \sum_{j=-m}^\infty (-1)^ja_{2j}\in \mcB_q(V,\omega), \end{equation*}
where $a_{2j}=a_{2j}(v)$ is homogeneous of degree $-2j$.
\end{definition}
The map $\iota$ is an involution,
\begin{equation*} \iota(a\star b) = \iota(b) \star \iota(a).
\end{equation*}
We let
\begin{equation*}\mcW_q(V,\omega):=\{w\in \mcW(V,\omega)\mid \lambda(w)\in \mcB_q(V,\omega)\},\end{equation*}
\begin{equation*}
\mcW^{2m}_q(V,\omega):=\{w\in \mcW^{2m}(V,\omega)\mid \lambda(w)\in \mcB_q(V,\omega)\}.\end{equation*}
%and
%\begin{equation*} \Alg(V,\omega) := \{(w_+,w_-)\in  \mcW_q(V,\omega)\oplus \mcW_q(V,\omega)^{op}\mid \lambda(w_+) =\iota \circ \lambda(w_-)\}\end{equation*}
%\begin{equation*} \Alg^{2m}(V,\omega) := \{(w_+,w_-)\in  \mcW^{2m}_q(V,\omega)\oplus \mcW^{2m}_q(V,\omega)\mid \lambda(w_+) =\iota \circ \lambda(w_-)\}\end{equation*}
%Here $\mcW_q(V,\omega)^{op}=\mcW_q(V,-\omega)$ is the opposite algebra of $\mcW_q(V,\omega)$, i.e. the product of elements in $\Alg(V,\omega)$ is
%\begin{equation*} (u_+,u_-)(w_+,w_-):= (u_+\#w_+,w_-\# u_-)\end{equation*}
%$\Alg(V,\omega)$ is a $\ZZ$-graded algebra.
\subsection{Action of the symplectic and unitary groups}
As can be seen from the definition of the $\#$ product, a linear symplectic  transformation $\phi \colon V\to V$  determines an automorphism of the algebra
$\mcW (V, \omega)$. For $\phi \in \Sp(V)$, $a\in \mcW(V, \omega)$ let $\phi(a):= a \circ \phi^{-1}$.
Then
\begin{equation*} \phi(a)\# \phi(b) = \phi(a\#b).\end{equation*}
This action preserves the ideal $\mcS(V,\omega)$, and so descends to an action on $\mcB (V, \omega)$. It preserves
$\mcB_q (V, \omega)\subset \mcB (V, \omega)$ and hence $\mcW_q (V, \omega)\subset \mcW (V, \omega)$.

Let $\mathfrak{g}\subset \mcW (V, \omega)$ be the subspace of  homogeneous polynomials of degree $2$ that are purely imaginary (i.e. with values in $i\RR$).
%\begin{equation*}\mathfrak{g}:= \{X \in C^\infty(V; i\mathbb{R})\mid X(s v) = s^2 X(v)\;  \forall s \in \mathbb{R}, v \in V\}\end{equation*}
%%%% Comment Erik Sep 2, 2020: I don't think this is correct
%%%%%%%%%%%%%
$\mathfrak{g}$ is a (real) Lie algebra endowed with the bracket
\[[X,Y]=X\#Y-Y\#X =i\{X, Y\}.\]
%The Lie algebra $\mathfrak{g}$ acts on $\mcW$ by inner derivations:
%\begin{equation*}
%w\mapsto [X, w]=i\{X, w\}\qquad  X\in \mathfrak{g},\; w \in \mcW
%\end{equation*}
%If $w\sim \sum w_m$ then $\{X,w\}\sim \sum \{X,w_m\}$, where $\{X,w_m\}$ is homogeneous of degree $-m$, like $w_m$.
%Therefore the action of $X$ on $\mcW$ preserves $\mcW_q$.
%Note that this action is given by  differentiation along the Hamiltonian vector field of $iX$.
Let $V^*:=\Hom(V, \mathbb{R})\subset \mcW$ be the (real) dual space of $V$.
If   $X\in \mathfrak{g}$ and $f \in V^*$ then  $[X, f] =\{iX,f\}$ is real-valued and homogeneous of degree 1, and so  $[X,f]\in V^*$.
We obtain a morphism of Lie algebras,
\begin{equation*}
\mu^*  \colon  \mathfrak{g} \to \End V^* \qquad \mu^*(X):= [X,\,\cdot\,].
\end{equation*}
The map $V\ni v \mapsto \omega(v, \cdot) \in V^*$ establishes an isomorphism $V \to V^*$ which can be used to define a symplectic form $\omega^*$  on $V^*$.
For $f$, $g \in V^*$ we have
$ \{f, g\}=\omega^*(f, g)\cdot 1$,
where  $1\in \mcW$ is a constant function on $V$.

If  $X \in \mathfrak{g}$ then $\mu^*(X) \in \mathfrak{sp}(V^*)$.
The map $ \mu^*  \colon  \mathfrak{g} \to \mathfrak{sp}(V^*)$ is a Lie algebra isomorphism.
We will identify the groups of symplectic transformations $\mathrm{Sp}(V)\cong \mathrm{Sp}(V^*)$ as well as the corresponding Lie algebras $\mathfrak{sp}(V)\cong \mathfrak{sp}(V^*)$ via the isomorphism $V\to V^*$: $v \mapsto \omega(v, \cdot)$.
We therefore obtain an isomorphism of Lie algebras
\begin{equation}\label{definitionmu} \iso \colon \mathfrak{g}\to \mathfrak{sp}(V).
\end{equation}
The action of $Sp(V)$ on  $\mcW(V,\omega)$ induces an action of the Lie algebra $\mathfrak{sp}(V)$ by derivations,
\begin{equation*} (\phi.w)(v):=-w(\phi(v))\qquad v\in V,\, w\in \mcW(V,\omega),\;\phi \in \mathfrak{sp}(V).
\end{equation*}
This is an action by inner derivations:
\begin{equation*} (\phi.w)(v)=[\mu^{-1}(\phi), w]\qquad  w\in \mcW(V,\omega),\;\phi \in \mathfrak{sp}(V).\end{equation*}
Similarly  $Sp(V)$ and $\mathfrak{sp}(V)$ act on  $\mcW(V,\omega)^{op}$  by automorphisms and derivations respectively. The action of $\mathfrak{sp}(V)$ is again by inner derivations:
\begin{equation*} (\phi.w)(v)=[-\mu^{-1}(\phi), w]_{op}\qquad  w\in \mcW(V,\omega)^{op},\;\phi \in \mathfrak{sp}(V),\end{equation*}
where $[\cdot, \cdot]_{op}$ is the commutator in $\mcW(V,\omega)^{op}$.

\subsection{Bundle of Weyl algebras}
Fix Darboux coordinates $(x, \xi)$, $x=(x_1, \ldots, x_n)$, $\xi=(\xi_1, \ldots, \xi _n)$ on $V$, thus identifying $V \cong \mathbb{R}^{2n}$.
In this situation,  we will omit the vector space from the notations, writing simply $\mcW$ for $\mcW(\mathbb{R}^{2n})$, etc.

Recall that on a compact contact manifold $M$ with contact form $\alpha$, $H=\Ker \alpha\subset TM$ is a symplectic bundle with symplectic form $\omega=-d\alpha$. As before, we denote by $J\in \End(H)$ is a compatible almost complex structure,  and by $P_H$ is the principal $U(n)$-bundle of frames of $(H^*)^{1,0}$.

Since $U(n) \subset \Sp(2n)$  acts on $\mcW_q\subset \mcW$  and $\mcW^{op}_q\subset \mcW^{op}$ by automorphisms, we can form associated bundles:
\begin{equation*}W_H^+:= P_H\times_{U(n)}\mcW_q\quad  W_H^-:= P_H\times_{U(n)}\mcW^{op}_q. \end{equation*}
%We identify $\Alg_H$ with the algebra of smooth sections of the bundle $\mathcal{A}_H$.
% \begin{equation*}\Alg_H = C^\infty(P_H; \Alg)^{Sp(2n)}\end{equation*}
The elements of the algebra of smooth sections of $W_H^+$ can be described as
  smooth $U(n)$-invariant functions from $P_H$ to $\mcW_q$,
\begin{equation*}
\mcW_H: = C^\infty(M;W^+_H) =C^\infty(P_H; \mcW_q)^{U(n)}.
\end{equation*}
where the action of $\phi \in U(n)$ on $s  \colon P_H\to  \mcW_q$ is given by $(\phi \cdot s)(p):= \phi(s(p\phi))$.
In other words, $s$ is invariant if $s(p\phi)=\phi^{-1}(s(p))$. Similarly, the algebra can be described as
\begin{equation*}
\mcW_H^{op} = C^\infty(M;W^-_H) =C^\infty(P_H; \mcW_q)^{U(n)}.
\end{equation*}
Finally, we can construct the bundle
\begin{equation*}B:= P_H\times_{U(n)}\mcB_q \end{equation*}
and the algebra
\begin{equation*}
\mcB_H: = C^\infty(M; B) =C^\infty(P_H; \mcB_q)^{U(n)}.
\end{equation*}
The map $\lambda$ then defines a homomorphism
$\lambda \colon \mcB_H \to \mcW_H$.
%We denote by $\mcW_H$ the algebra of smooth sections in the bundle over $M$ whose fiber at $p\in M$ is the Weyl algebra $\mcW_q(H_p^*,\omega^*_p)$. Here  $\omega_p^*$ is the symplectic form on $H^*_p$ that is dual to $\omega_p$. $\mcW_H^{2m}$ is the subspace of sections in the bundle with fiber $\mcW_q^{2m}(H_p^*,\omega^*_p)$. $\mcW_H^{op}$ is the bundle whose fibers are the opposite algebras $\mcW_q(H^*_p,\omega_p^*)^{op}=\mcW_q(H^*_p,-\omega^*_p)$.
\subsection{Heisenberg principal symbols}

Let $\Psi^m_H$ be the set of Heisenberg pseudodifferential operators of order $m\in \ZZ$.
The algebra of order zero principal symbols,
\begin{equation*} \Symb_H^0:=\Psi^0_H/\Psi^{-1}_H\end{equation*}
is a subalgebra of $\mcW_H\oplus \mcW_H^{op}$,
\begin{equation*} \Symb_H^0 =\{(w_+, w_-)\in \mcW_H^0\oplus (\mcW^0_H)^{op} \mid \lambda(w_+) =\iota \circ \lambda(w_-)\}, \end{equation*}
where $\mcW_H^0$, $(\mcW^0_H)^{op}$ denote the subalgebras of $\mcW_H$ and  $\mcW_H^{op}$ respectively consisting of sections of order $0$ in the Weyl algebra.
\subsection{Principal symbols in the extended Heisenberg calculus}

Let $\Psi^{m,k}_{eH}$ be the set of  pseudodifferential operators in the extended Heisenberg calculus that have  classical order $m\in \ZZ$, and Heisenberg order $k\in \ZZ$ (see \cite{EMxx}).
The algebra of order $(0,0)$ principal symbols,
\begin{equation*} \Symb_{eH}^{0,0}:=\Psi^{0,0}_{eH}/\Psi^{-1,-1}_{eH}\end{equation*}
is
\begin{equation*} \Symb_{eH}^{0,0} =\{(w_+, w_-, f)\},\end{equation*}
where
\begin{itemize}
    \item $w_+ \in \mcW_H^0$, $w_- \in (\mcW^0_H)^{op}$
    \item $f:[0,1]\to C^\infty(S(H^*))$ is a smooth function; here $S(H^*)$ is the sphere bundle of $H^*$.
    \item $f(0)\in C^\infty(S(H^*))$ is the leading term in the formal series  $\lambda(w_+)$, while $f(1)\in C^\infty(S(H^*))$ is the leading term in the formal series $\lambda(w_-)$.
\end{itemize}
The product is
\begin{equation*}(w_+, w_-, f)(u_+, u_-, g) = (w_+\# u_+, u_-\# w_-, fg),\end{equation*}
where $fg$ is pointwise multiplication of the functions $f, g$.

\section{Connection and curvature for a bundle of Weyl algebras}\label{symplectic bundle}

Let  $\Omega^\bullet(P_H; \mcW)_{basic}$ be the space of basic $\mcW$-valued forms.
Recall that an $\mcW$-valued  differential form  $\eta$ on $P_H$ %(of positive degree)
is called basic if:
\begin{itemize}
    \item $\eta$ is horizontal, i.e. $\iota_X \eta=0$ for every vertical vector field $X$ on $P_H$;
    \item $\eta$ is $U(n)$ invariant, i.e. $\phi^* \eta = \phi^{-1}(\eta)$ for every $\phi \in U(n)$.
\end{itemize}
%We have a canonical identification
$k$-forms with values in the bundle $\mcW_H$ are, by definition, basic $\mcW$-valued $k$-forms.
We denote
\begin{equation*}
% \Omega^\bullet(P_H; \mathcal{A}_H)_{basic}\cong \Omega^\bullet(M; \mathcal{A}_H)
 \Omega^k(\mcW_H) = \Omega^k(M;W^+_H) = \Omega^k(P_H; \mcW_q)_{basic}.
\end{equation*}
A unitary connection $\nabla$ on $H\cong H^*$ can be represented by a  connection $1$-form
\begin{equation*} \beta \in  \Omega^1(P_H; \mathfrak{u}(n)). \end{equation*}
The curvature of $\nabla$ is
\begin{equation*} \theta := d\beta +\frac{1}{2}[\beta, \beta] \in \Omega^2(P_H, \mathfrak{u}(n))_{basic}. \end{equation*}
The 1-form $\beta$ defines a  covariant derivative
\begin{equation*}
%\con  \colon  \Alg_H \to  \Omega^1(M; \mathcal{A}_H)
\con^+  \colon  \mcW_H \to  \Omega^1(\mcW_H)
\end{equation*}
by
\begin{equation*}
%\con (a) :=da +\beta \cdot a\in \Omega^1(P_H;\Alg)_{basic} \qquad a \in  \Alg_H = C^\infty(P_H; \Alg)^{Sp(2n)}
\con^+ (a) :=da +\beta \cdot a\in \Omega^1(\mcW_H), \qquad a \in  \mcW_H = C^\infty(P_H; \mcW_q)^{U(n)}.
\end{equation*}
This covariant derivative extends to a derivation
\begin{equation*}%\label{defcon}
%\con  \colon  \Omega^k(M; \mathcal{A}_H) \to \Omega^{k+1}(M; \mathcal{A}_H)
\con^+  \colon  \Omega^k(\mcW_H) \to \Omega^{k+1}(\mcW_H)
\end{equation*}
defined by the same formula:
\begin{equation*}
%\con (\eta) :=d\eta +\beta \cdot \eta  \in \Omega^{k+1}(P_H; \Alg)_{basic}\qquad  \eta\in  \Omega^k(P_H; \Alg)_{basic}
\con^+ (\eta) :=d\eta +\beta \cdot \eta.
\end{equation*}
The curvature of $\con^+$ is defined by
%\begin{equation*}
%\con^2 (\eta)= (d\alpha +\frac{1}{2}[\alpha, \alpha])\cdot \eta =
%[\Iso(d\alpha +\frac{1}{2}[\alpha, \alpha]), \eta]
%\end{equation*}
%\begin{equation*}
%(\con^2 (\eta)= \theta\cdot \eta
%\end{equation*}
%By Proposition ?? in \cite{GvE22},
%\begin{equation}\label{defcon}
%\con (\eta) = d\eta +[\Iso(\beta), \eta] \qquad %\con^2 (\eta) = [\Iso(\theta), \eta] %\qquad \eta %\in  \Omega^k(P_H; \Alg)_{basic}
%\end{equation}
\begin{equation}\label{deftheta}
%\curv:= \Iso (\theta) \in  \Omega^2(P_H, \Alg)_{basic}   \cong \Omega^2(M, \mathcal{A}_H)
\curv^+:= \mu^{-1} (\theta) \in  \Omega^2(\mcW_H)
\end{equation}
Then
\begin{equation*}
    (\con^+)^2(\eta) = [\curv^+,\eta]\qquad  \eta\in \Omega^\bullet(\mcW_H).
\end{equation*}
%for any $\eta \in  \Omega^k(M;\mathcal{A}_H) = \Omega^k(P_H; \Alg)_{basic}$.

\begin{lemma} With the definitions above we have
\begin{equation*}
    \con^+(\curv^+)=0.
\end{equation*}
\end{lemma}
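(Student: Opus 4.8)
The statement $\con^+(\curv^+)=0$ is a Bianchi-type identity, so the plan is to reduce it to the ordinary Bianchi identity for the unitary connection $\beta$ together with the fact that $\mu^{-1}\colon \mathfrak{sp}(V)\to\mathfrak g\subset\mcW$ intertwines the relevant brackets. First I would unwind the definitions: by \eqref{deftheta}, $\curv^+=\mu^{-1}(\theta)$ where $\theta=d\beta+\frac12[\beta,\beta]$ is the $\mathfrak u(n)$-valued curvature form, and $\con^+$ acts on $\Omega^\bullet(\mcW_H)$ by $\eta\mapsto d\eta+\beta\cdot\eta$, where $\beta\cdot\eta$ denotes the action of the Lie-algebra-valued form $\beta$ on the $\mcW_q$-valued form $\eta$ via the inner derivation $w\mapsto[\mu^{-1}(\beta),w]$. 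Thus
\begin{equation*}
\con^+(\curv^+) = d\bigl(\mu^{-1}(\theta)\bigr) + \bigl[\mu^{-1}(\beta),\,\mu^{-1}(\theta)\bigr].
\end{equation*}
Since $\mu^{-1}$ is a fixed linear map (indeed a Lie algebra isomorphism onto $\mathfrak g$), it commutes with $d$, so the first term is $\mu^{-1}(d\theta)$. For the second term, the key point is that on elements of $\mathfrak g$ the commutator bracket in $\mcW$ agrees with the Lie bracket transported from $\mathfrak{sp}(V)$ (this is exactly the content of the discussion preceding \eqref{definitionmu}: $[X,Y]=X\#Y-Y\#X=i\{X,Y\}$ makes $\mathfrak g$ a Lie algebra and $\mu$ an isomorphism), hence $[\mu^{-1}(\beta),\mu^{-1}(\theta)]=\mu^{-1}([\beta,\theta])$, where the right-hand bracket is the graded bracket of $\mathfrak u(n)$-valued forms under $\mathfrak{u}(n)\hookrightarrow\mathfrak{sp}(2n)$. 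Combining, $\con^+(\curv^+)=\mu^{-1}\bigl(d\theta+[\beta,\theta]\bigr)$, and this vanishes by the classical Bianchi identity $d\theta+[\beta,\theta]=0$ for the connection form $\beta$.

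The main obstacle — and the only step requiring care — is bookkeeping with signs and with the two different meanings of "bracket": the graded commutator of differential forms valued in an associative algebra versus the Lie bracket, and checking that $\mu^{-1}$ really does carry one to the other once the form-degree signs are inserted. Concretely, one must verify that for a $1$-form $\beta$ and a $2$-form $\theta$ valued in $\mathfrak u(n)$, the expression $\beta\cdot(\mu^{-1}\theta)$ appearing in $\con^+$ equals $\mu^{-1}$ applied to the form-bracket $[\beta,\theta]$, with the same sign convention used in writing $\theta=d\beta+\frac12[\beta,\beta]$ and in the classical Bianchi identity. This is routine but is where an error would hide; I would do it by expanding everything locally in a basis and tracking the Koszul signs. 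Once that is in hand, the identity is immediate, so I would present the proof as the short computation above with a lemma-internal remark that the bracket identifications are those fixed in Section \ref{symbols}.
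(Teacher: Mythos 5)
The paper states this lemma without any proof, treating it as immediate from the definitions, and your argument is exactly the standard derivation the authors evidently have in mind: pull everything back to $P_H$, use that $\mu^{-1}$ is a fixed linear (indeed Lie-algebra) isomorphism commuting with $d$ and intertwining the brackets, and invoke the classical Bianchi identity $d\theta+[\beta,\theta]=0$. Your proof is correct, and the sign/bracket bookkeeping you flag is the only point of care; it checks out with the conventions of Sections \ref{symbols} and \ref{symplectic bundle}.
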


In an entirely analogous manner, connection form $\beta$ yields a covariant derivative
\begin{equation*}%\label{defcon}
%\con  \colon  \Omega^k(M; \mathcal{A}_H) \to \Omega^{k+1}(M; \mathcal{A}_H)
\con^-  \colon  \Omega^k(\mcW_H^{op}) \to \Omega^{k+1}(\mcW_H^{op}),
\end{equation*}
where
\begin{equation*}
\Omega^k(\mcW_H^{op}) = \Omega^k(M;W^-_H) = \Omega^k(P_H; \mcW_q^{op})_{basic}.
\end{equation*}
It satisfies
\begin{equation*}
    (\con^-)^2(\eta) = [\curv^-,\eta]\qquad  \eta\in \Omega^\bullet(\mcW_H^{op}),
\end{equation*}
where $\curv^{-} =-\curv^+ \in \Omega^2(\mcW_H^{op})$

Finally, the same construction defines a covariant derivative
\begin{equation*}%\label{defcon}
%\con  \colon  \Omega^k(M; \mathcal{A}_H) \to \Omega^{k+1}(M; \mathcal{A}_H)
\con^+  \colon  \Omega^k(\mcB_H) \to \Omega^{k+1}(\mcB_H),
\end{equation*}
where
\begin{equation*}
% \Omega^\bullet(P_H; \mathcal{A}_H)_{basic}\cong \Omega^\bullet(M; \mathcal{A}_H)
 \Omega^k(\mcB_H) = \Omega^k(M;B) = \Omega^k(P_H; \mcB_q)_{basic}
\end{equation*}
satisfying
\begin{equation*}
    (\con^+)^2(\eta) = [\curv^+,\eta]\qquad  \eta\in \Omega^\bullet(\mcB_H)
\end{equation*}
and
\begin{equation*}
    \lambda(\con^+(\eta) =  \con^+(\lambda(\eta))\qquad  \eta\in \Omega^\bullet(\mcW_H).
\end{equation*}

\section{A regularized trace for the bundle of Weyl algebras}\label{trace}
Weyl quantization  associates with $a \in \mcW=\mcW(\RR^{2n})$ a psedudiffferential operator $A=\Op^w(a)$  defined (formally) on functions $u\in \mcS(\RR^n)$ as
\begin{equation}\label{eqn:Weyl}
 (Au)(x) = \frac{1}{(2\pi)^n} \iint e^{i(x-y)\cdot \xi} a\left(\frac{x+y}{2},\xi\right)u(y)\,dy\, d\xi.
 \end{equation}
 We denote by $\ho$ the harmonic oscillator
\begin{equation*}
\ho= \sum \limits_{j=1}^n \left(-\frac{\partial^2}{\partial x_j^2}+x_j^2\right)= \Op^w\left(\sum \limits_{j=1}^n (\xi_j^2+x_j^2)\right).
\end{equation*}
$\ho$ is a strictly positive unbounded selfadjoint operator on $L^2(\RR^n)$.

For $a\in \mcW_q^{2m}$ of even order $2m$, $\Op^w(a)\ho^{-z}$ is of  trace class if $\re z> n+m$.
Define the zeta-function
\begin{equation*} \zeta_a(z) := \Tr(\Op^w(a)\ho^{-z}), \quad  \re z> n+m.\end{equation*}
%The complex powers $Q^{-z}$ and product $wQ^{-z}$ are calculated in the representation of the Weyl algebra as operator on $L^2(\RR^n)$.
The zeta function is holomorphic for $\re z> n+m$, and it
extends to a meromorphic function with at most simple poles at $m+n, m+n-1, m+n-2, \dots$.
The residue at $z=0$  of the  zeta-function gives a residue trace on $\mcW_q$,
\begin{equation*} \Res \colon \mcW_q\to \CC\qquad \Res(a) = \lim_{z\to 0} z\zeta_a(z). \end{equation*}
$\Res$ is a trace on $\mcW_q$ that vanishes on the Schwartz class ideal $\Sch$.
It follows that residue induces a trace on the quotient $\mcB_q=\mcW_q/\Sch$ which we also denote $\Res$.
An explicit formula for  $\Res a$ in terms of the asymptotic expansion $a=\sum_ja_{2j}\in \mcB_q$ is
\begin{equation*}
\Res a= -\frac{1}{2(2\pi)^n}\int_{S^{2n-1}}a_{2n}(\theta) d \theta,
\end{equation*}
where $S^{2n-1}$ is the unit sphere in $V$.

We denote by $\Trh (a)$ the constant term at $z=0$ of the zeta-function,
\begin{equation*}  \Trh(a) = \lim_{z\to 0}  \left(\zeta_a(z) - \frac{1}{z}\Res(a)\right).
\end{equation*}
If $a\in \mcS$ then $\Tr(\Op^w(a)\ho^{-z})$ is an entire function, and we see that
\begin{equation*} \Trh(a) = \Tr \Op^w(a) =\frac{1}{(2\pi)^n} \int a(x,\xi) dx \, d\xi\qquad \forall a\in \Sch(\mathbb{R}^{2n}).\end{equation*}
However the functional $\Trh$ is not a trace on $\mcW_q$. More exlicitely, we have the following result. First note that even though  $\log(x^2+\xi^2) \notin \mcB_q$, the formula
\begin{equation}\label{defdelta}
\delta(a) = a\star \log(x^2+\xi^2) - \log(x^2+\xi^2) \star a
\end{equation}
defines   a derivation $\delta$ of $\mcB_q$.
Direct calculation shows the following
\begin{lemma}
%\begin{equation*}
$\Trh([a, b]) =\Res \lambda(a)  \delta (\lambda(b))$.
%\end{equation*}
\end{lemma}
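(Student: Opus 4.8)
The plan is to reduce the identity to a finite computation about the Weyl product and the harmonic oscillator, using the meromorphic continuation and the definition of $\Trh$ as the regularized (constant) term of the zeta function. First I would note that $\Trh$ is linear and, by the last displayed equation before the lemma, agrees with the honest operator trace $\frac{1}{(2\pi)^n}\int a\, dx\, d\xi$ on the Schwartz ideal $\Sch$; since $\Tr$ is a trace, $\Trh([a,b])$ depends only on the images $\lambda(a),\lambda(b)\in\mcB_q$, so it suffices to work with formal series. Likewise $\Res$ and $\delta$ are defined on $\mcB_q$, so both sides of the claimed identity are well-defined bilinear functionals of $\lambda(a),\lambda(b)$.

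Next I would bring in the zeta functions. For $a,b\in\mcW_q$ write $\zeta_{ab}(z)=\Tr(\Op^w(a)\Op^w(b)\ho^{-z})=\Tr(\Op^w(a\# b)\ho^{-z})$ and $\zeta_{ba}(z)=\Tr(\Op^w(b)\Op^w(a)\ho^{-z})$. By cyclicity of the (operator) trace in the region of convergence, $\zeta_{ab}(z)=\Tr(\ho^{-z}\Op^w(a)\Op^w(b))=\Tr(\Op^w(b)\ho^{-z}\Op^w(a))$, so
\begin{equation*}
\zeta_{[a,b]}(z)=\zeta_{ab}(z)-\zeta_{ba}(z)=\Tr\bigl(\Op^w(b)[\ho^{-z},\Op^w(a)]\bigr).
\end{equation*}
The commutator $[\ho^{-z},\Op^w(a)]$ carries an explicit factor of $z$: formally $[\ho^{-z},\Op^w(a)]=-z\,\ho^{-z}[\log\ho,\Op^w(a)]+O(z^2)$, and $[\log\ho,\Op^w(a)]=\Op^w(\delta'(a))$ where $\delta'$ is the derivation of $\mcW_q$ corresponding to $\delta$ on $\mcB_q$ — this is exactly the reason $\delta$ in \eqref{defdelta} is defined with $\log(x^2+\xi^2)$, the Weyl symbol of $\ho$. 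Substituting,
\begin{equation*}
\zeta_{[a,b]}(z)=-z\,\Tr\bigl(\Op^w(b)\,\ho^{-z}\,\Op^w(\delta'(a))\bigr)+z^2(\cdots)=-z\,\zeta_{b\star\delta'(a)}(z)+\text{h.o.t.}
\end{equation*}
Taking the constant term at $z=0$ kills the $z^2$-terms (they contribute $0$ after multiplying the meromorphic $\zeta$, whose worst singularity is a simple pole) and converts the leading $-z\,\zeta_{b\star\delta'(a)}(z)$ into minus the \emph{residue} of $\zeta_{b\star\delta'(a)}$ at $z=0$, i.e. $-\Res(\lambda(b)\star\delta(\lambda(a)))$. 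A sign/ordering bookkeeping, together with $\Res$ being a trace and $\delta$ a derivation, then rearranges this into $\Res\bigl(\lambda(a)\,\delta(\lambda(b))\bigr)$, which is the asserted formula.

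The main obstacle is making the expansion $[\ho^{-z},\Op^w(a)]=-z\,\ho^{-z}\Op^w(\delta'(a))+O(z^2)$ rigorous at the level of trace-class operators and meromorphic families, rather than merely formally: one must control the remainder uniformly enough that its contribution to the constant term at $z=0$ genuinely vanishes, and one must check that $\log\ho$ interacts with the Weyl calculus so that $[\log\ho,\Op^w(a)]$ is again (the quantization of) an element of $\mcW_q$ of order $\le \mathrm{ord}(a)$ modulo $\Sch$, which is precisely what is needed for $\delta$ to be a well-defined derivation of $\mcB_q$ as claimed in the text. I expect this to be handled by writing $\ho^{-z}=\frac{1}{2\pi i}\oint \lambda^{-z}(\ho-\lambda)^{-1}\,d\lambda$ and commuting $\Op^w(a)$ through the resolvent, or by differentiating the identity $\ho^{z}\Op^w(a)\ho^{-z}$ in $z$ at $z=0$; everything else — cyclicity in the convergence region, taking constant terms, and the trace/derivation manipulations with $\Res$ and $\delta$ — is routine once that analytic input is in place.
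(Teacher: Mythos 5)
The paper itself offers no proof of this lemma beyond the phrase ``direct calculation'' (the details are in effect delegated to \cite{GvE22}), so there is no argument in the text to match yours against. Your zeta-function/trace-defect strategy is the standard one and is surely what the authors intend: the reduction to $\lambda(a),\lambda(b)$, the cyclicity manipulation giving $\zeta_{[a,b]}(z)=\Tr\bigl(\Op^w(b)[\ho^{-z},\Op^w(a)]\bigr)$, and the extraction of a residue from the explicit factor of $z$ (with the $O(z^2)$ remainder killed by the at-worst-simple pole) are all sound, and you correctly isolate the analytic point --- controlling the remainder in the expansion of $[\ho^{-z},\cdot]$ --- that must be made rigorous.

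However, two of the steps you dismiss as bookkeeping conceal the actual content. First, your identification $[\log\ho,\Op^w(a)]=\Op^w(\delta'(a))$ rests on the assertion that $\log(x^2+\xi^2)$ is ``the Weyl symbol of $\ho$'' --- it is not: $x^2+\xi^2$ is the Weyl symbol of $\ho$, and the Weyl symbol of $\log\ho$ equals $\log(x^2+\xi^2)$ only modulo a lower-order (radial) correction $c$. Carrying $c$ through your computation produces an extra term of the form $\Res\bigl([\lambda(a),\lambda(b)]\star c\bigr)=\Res\bigl(\lambda(a)\star[\lambda(b),c]\bigr)$, which has no reason to vanish for a general $c\in\mcB_q$ of order $-2$; you must either prove it vanishes for the specific $c$ coming from the harmonic oscillator or rearrange the argument so that only $\log(x^2+\xi^2)$ ever enters. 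Second, your computation lands (up to sign) on $\Res\bigl(\lambda(b)\star\delta(\lambda(a))\bigr)$, while the lemma asserts $\Res\bigl(\lambda(a)\,\delta(\lambda(b))\bigr)$. Passing between these is not a formal rearrangement: it requires $\Res\circ\delta=0$, i.e.\ that the traciality of the residue persists for products with the log symbol, and granting that, the two expressions differ by a sign, so the overall sign must then be reconciled with the convention relating $\#$ to operator composition (the very convention responsible for the $\mcW_H$ versus $\mcW_H^{op}$ dichotomy elsewhere in the paper). Neither issue is fatal, but as written your proposal would not let a reader verify the sign or rule out the correction term, and these are precisely the places where trace-defect formulas of this kind go wrong.
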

The regularized trace $\Trh$ is \emph{not} invariant under the action of the symplectic group $\Sp(2n):=\Sp(\mathbb{R}^{2n})$. However it is invariant under the action of unitary subgroup $U(n) \subset \Sp(2n)$:
\begin{equation*}
    \Trh(\phi(a))=\Trh(a) \qquad   a\in \mcW,\;\phi \in U(n).
\end{equation*}
Clearly the same invariance holds for $\Trh$ on $\mcW^{op}$.
Therefore we have regularized traces defined on sections in the bundles of Weyl algebras,
\[\Trh : \mcW_H\to C^\infty(M)\qquad \Trh:\mcW_H^{op}\to C^\infty(M).\]

\section{The Chern character in cyclic homology}\label{cyclic section}
In this section we give a very brief overview of the periodic cyclic homological complex, mostly to fix the notation. We also recall the  Chern character map into the cyclic homology.
The standard reference for this material is \cite{loday}.

For a complex unital algebra $A$ set $C_l(A):= A \otimes(A/(\mathbb{C} \cdot 1))^{\otimes l}$, $l \ge 0$.
%For a topological algebra, e.g. a normed algebra, one needs to take an appropriately completed tensor product.
One defines differentials $b \colon  C_l(A) \to C_{l-1}(A)$ and $B  \colon  C_l(A) \to C_{l+1}(A)$ by
\begin{align*}
b (a_0\otimes a_1\otimes \ldots a_l):=& \sum_{i=0}^{l-1} (-1)^ia_0\otimes \ldots a_ia_{i+1}\otimes \ldots a_l+(-1)^la_la_0\otimes a_1\otimes \ldots a_{l-1},\\
B (a_0\otimes a_1\otimes \ldots a_l):=& \sum_{i=0}^{l} (-1)^{li}1 \otimes a_i\otimes a_{i+1}\otimes \ldots a_{i-1}\text{ (with $a_{-1}:=a_l).$}
\end{align*}
One verifies directly that $b$, $B$ are well defined and satisfy $b^2=0$, $B^2=0$, $Bb+bB=0$.  Let $u$ be a formal variable of degree $-2$.
The space of periodic cyclic chains of degree $ i \in \ZZ$ is defined by
\begin{equation*}
CC^{per}_{i}(A) = \left(C_{\bullet}(A)[u^{-1},u]]\right)_i=\prod \limits_{-2n+l=i} u^nC_l(A).
\end{equation*}
Note that $CC^{per}_{i}(A)= uCC^{per}_{i+2}(A)$.
We will write a chain in $CC^{per}_i(A)$ as
$ \alpha = \sum \limits_{i+2m\ge 0}u^m \alpha_{i+2m}$ where $\alpha_l \in C_l(A)$.
The boundary is given by $b+uB$ where $b$ and $B$ are the Hochschild and Connes boundaries of the cyclic complex.
%The homology of this complex is denoted $HC^{per}_{\bullet}(A)$.
The homology of this complex is periodic cyclic homology, denoted $HC^{per}_{\bullet}(A)$.

If $r \in M_n(A)$ is invertible the following formula defines a cycle in the periodic cyclic complex:
\begin{equation}\label{cyclicchernformula}
\Ch(r):= -\frac{1}{2 \pi i} \sum_{l=0}^{\infty}(-1)^l\, l!\, u^l \tr (r^{-1}\otimes r)^{\otimes (l+1)} \in CC^{per}_1(A),
\end{equation}
where $\tr  \colon  (A \otimes M_n(\mathbb{C}))^{\otimes k} \to A^{\otimes k}$ is the map given by
\begin{equation*}
\tr (a_0\otimes m_0)\otimes (a_1\otimes m_1) \otimes \ldots (a_k\otimes m_k)= (\tr m_0m_1\ldots m_k) a_0  \otimes a_1 \otimes \ldots a_k.
\end{equation*}
In the case of interest to us,  $A$ will be a Fr\'echet algebra.
In that case we will use a projective tensor product to define $C_l(A)=A \otimes(A/(\mathbb{C} \cdot 1))^{\otimes l}$.
One can define the topological $K$-theory of a Fr\'echet algebra as $K_1(A):=\pi_0(GL(A))$.
With these definitions, \eqref{cyclicchernformula} defines  the (odd) Chern character homomorphism
\begin{equation}\label{CyclicChern}
%\Ch  \colon  K_1(A) \to HC^{per}_1(A)
\Ch  \colon  K_1(A) \to HC^{per}_1(A)
\end{equation}
from  topological $K$-theory to  periodic cyclic homology.
If $t \mapsto r_t$, $t\in [0,1]$ is a (piecewise) smooth map from $[0,1]$ to  $GL_n(A)$ then
\begin{equation*}
\Ch (r_1)-\Ch(r_0) = (b+uB) \Tch(r_t),
\end{equation*}
where
\begin{equation*}
\Tch (r_t):= -\frac{1}{2 \pi i}\int_{0}^{1} \xi(t) dt   ,
\end{equation*}
\begin{equation*}  \xi(t)= \tr (r_t^{-1}\frac{dr_t}{dt})
 +\sum_{l=0}^\infty (-1)^{l+1} l! u^{l+1}
 \sum_{j=0}^l \tr \big( (r_t^{-1} \otimes r_t)^{\otimes j+1}\otimes
   r_t^{-1} \frac{dr_t}{dt} \otimes (r_t^{-1} \otimes r_t)^{ \otimes l-j} \big),
\end{equation*}
which shows in particular that Chern character is well-defined.

\section{An extension of the algebra of symbols}\label{extension}
In this section we define an extension of the algebra of extended Heisenberg symbols,  algebra $\Ext_H$, which is a  convenient source of the characteristic map in the next section.

Let $\mcB_H$ be the algebra of smooth sections in the bundle over $M$ whose fiber at $p\in M$ is $\mcB_q(H_p^*,\omega_p^*)$.
We let $\Ext_H$ be the algebra
\begin{equation*} \Ext_H := \{(w_+,w_-,r) \},\end{equation*}
where
\begin{itemize}
    \item $w_+ \in \mcW_H$, $w_- \in \mcW_H^{op}$.
    \item $r:[0,1]\to \mcB_H$ is a smooth function.
    \item $r(1) = \lambda(w_+)$, $r(0)=\iota \circ \lambda(w_-)$.
\end{itemize}
There is the obvious  algebra homomorphism,
\begin{equation*} \Ext_H \to \Symb_{eH}^{0,0}\quad (w_+,w_-, r)\mapsto (w_+, w_-, f),\end{equation*}
where $f(t)$ is the leading term of the series $r(t)\in \mcB_H$.
This quotient map has a canonical section,
\begin{equation*} s\colon  \Symb_{eH}^{0,0}\to \Ext_H \quad (w_+,w_-, f)\mapsto (w_+, w_-, r(t))\end{equation*}
%where $r(t)$ is a series  for which all  terms vanish except for the leading term, which is $f(t)$.
where $r(t)=r_0+tr_1$ is linear in $t$,
$r_0=\lambda(w_-)$, $r_1=  \lambda(w_+)-\iota \circ\lambda(w_-)$.
While this section $s$ is not an algebra homomorphism,  it maps invertible elements to invertible elements and gives a well-defined map of homotopy classes of invertible elements.
Thus, this section $s$ determines a map in $K$-theory,
\[ K_1(\Symb_{eH}^{0,0})\to K_1(\Ext_H).\]
A pseudodifferential  operator $\msL\in \Psi_{eH}^{0,0}$ with invertible principal symbol $\sigma_{eH}(\msL)\in \Symb_{eH}^{0,0}$ determines a class
\[ [\sigma_{eH}(\msL)]\in K_1(\Symb_{eH}^{0,0})\]
and, via $s$,  also a class
\[ [s(\sigma_{eH}(\msL))]\in K_1(\Ext_H).\]
We shall express the index of $\msL$ as a function of this symbol class in $K_1(\Ext_H)$.

\section{The characteristic map}\label{charmap}
In this section, we associate with the algebra $\Ext_H$ the complex $\mcC_\bullet(\Ext_H)$ and define a characteristic map from this complex to de Rham complex.

As a vector space,
\begin{equation*}
\mcC_\bullet(\Ext_H):= CC^{per}_\bullet(\mcW_H) \oplus CC^{per}_\bullet(\mcW_H^{op}) \oplus CC^{per}_{\bullet+1}(\mcB_H).
\end{equation*}
The differential is given by
\begin{equation}\label{defboundary}
\partial (\zeta^+, \zeta^-, \gamma) =
\left((b+uB)\zeta^+,(b+uB)\zeta^-, \lambda_*(\zeta^+) -(\iota\circ\lambda)_*(\zeta_-)) -(b+uB) \gamma \right),
\end{equation}
where $(\zeta^+, \zeta^-, \gamma) \in \mcC_\bullet(\Ext_H)$.
The homology of the complex $\mcC_\bullet(\Ext_H)$ is denoted by $\HH_\bullet(\Ext_H)$, and the homology of the cycle $(\zeta^+, \zeta^-, \gamma) \in \mcC_\bullet(\Ext_H)$ is denoted by $[\zeta^+, \zeta^-, \gamma] \in \HH_\bullet(\Ext_H)$.
We have a group homomorphism
\begin{equation*}
\Ch \colon K_1 (\Ext_H) \to \HH_1(\Ext_H)
\end{equation*}
given by
\begin{equation}\label{cherne}
\Ch \colon [(w_+,w_-,r_t)] \mapsto \left[\Ch(w_+),\Ch(w_-),\Tch(r_t)\right].
\end{equation}

We will now define a morphism of complexes
\begin{equation*}
\mcC_\bullet(\Ext_H) \to \Omega^\bullet(M)[u^{-1},u].
\end{equation*}
We first construct maps
\begin{equation*}
\Phi^{+} \colon CC_\bullet(\mcW_H)  \to \Omega^\bullet(M), \quad
\Phi^{-} \colon CC_\bullet(\mcW_H^{op})  \to \Omega^\bullet(M)[u]
\end{equation*}
and
\begin{equation*}
\phi \colon CC_\bullet(\mcB_H) \to \Omega^{\bullet-1}(M)[u]
\end{equation*}
described as follows. Choose a unitary connection on $H$. As in the Section \ref{symplectic bundle} we can define connections $\con^+, \con^-$, with curvatures  $\curv^+, \curv^-=  -\curv^+$.
The definition of the maps $\Phi^{\pm}$ is inspired by \cites{Co94, gor1, gor2} and is given by
 \begin{multline*}
\Phi^\pm(a_0\otimes a_1\otimes a_k)=\\
    \sum_{m=0}^k (-1)^{m(k-m)}\sum_{i_0, i_1, \ldots i_{k+1}} \frac{(-1)^{i_0+\ldots +i_{k+1}}}{(i_0+i_1+\ldots +i_{k+1}+k+1)!}\Trh  (\curv^{\pm})^{i_0} \con^{\pm}(a_m) (u\curv^{\pm})^{i_1}\con^{\pm}(a_{m+1})\ldots \\
(u\curv^{\pm})^{i_{k+1-m}} a_0 (u\curv^{\pm})^{i_{k+2-m}}\ldots \con^{\pm}(a_{m-1})(u\curv^{\pm})^{i_{k+1}},
\end{multline*}
%where
%\begin{equation*}
%\frac{(-1)^{i_0+\ldots +i_k}}{(i_0+i_1+\ldots i_k+k)!}
%\end{equation*}

or equivalently
\begin{multline*}
\Phi^\pm(a_0\otimes a_1\otimes a_k)=\\
\sum_{m=0}^k (-1)^{m(k-m)} \int_{\Delta^{k+1}}\Trh e^{-t_0 u\curv^{\pm}} \con^{\pm}(a_m) e^{-t_1 u\curv^{\pm}}\con^{\pm}(a_{m+1})\ldots \\
e^{-t_{k+1-m}u \curv^{\pm}}a_0 e^{-t_{k+2-m} u\curv^{\pm}}\ldots \con^{\pm}(a_{m-1})e^{-t_{k+1} u\curv^{\pm}}) dt_1\ldots dt_{k+1}.
\end{multline*}
The map $\phi$ is given
\begin{multline*}
\phi(b_0\otimes b_1\otimes b_k)=
\sum_{m=1}^k \sum_{i_0, i_1, \ldots i_{k}} \frac{(-1)^{i_0+\ldots +i_k+m-1}}{(i_0+i_1+\ldots i_k+k)!}\\
\Res  b_0  (u\curv^{+})^{i_0}\con^{+}(b_1)(u\curv^{+})^{i_1} \ldots
 \delta(b_m)\ldots \con^{+}(b_k)(u\curv^{+})^{i_{k}}, \end{multline*}
where $\delta$ is the derivation defined in \eqref{defdelta}.
\begin{proposition}
The maps $\Phi^\pm$, $\phi$ defined above satisfy the following relations:
\begin{align*}
    \Phi^+ \circ (b+uB) - (ud)\circ \Phi^+ &= \phi \circ \lambda_*,\\
    \Phi^- \circ (b+uB) - (ud)\circ \Phi^- &= (-1)^{n}\phi \circ (\iota \circ \lambda)_*,
    \\ \phi \circ (b+uB) - (ud)\circ \phi &=0.
\end{align*}

\end{proposition}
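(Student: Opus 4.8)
The plan is to verify the three identities by direct computation, using the fact that $\Phi^{\pm}$ and $\phi$ are built from the same algebraic template — traces of products of $\con$-covariant derivatives interleaved with exponentials of the curvature $\curv$ — so that the standard ``cyclic cocycle from a connection'' calculation of Connes and Gorokhovsky (cf.\ \cites{Co94, gor1, gor2}) applies almost verbatim. First I would establish the third identity, $\phi\circ(b+uB) - (ud)\circ\phi = 0$, because it is the cleanest: $\Res$ is an honest trace on $\mcB_q$ vanishing on the Schwartz ideal, so the computation of $\phi\circ b$ telescopes exactly as in the closed-trace case, the $uB$ term contributes the pieces with a $1$ inserted (which survive only through the $\delta$ slot since $\con^+(1)=0$ and $\delta(1)=0$ force the relevant terms to collapse), and the Bianchi identity $\con^+(\curv^+)=0$ (the Lemma in Section~\ref{symplectic bundle}) together with $(\con^+)^2 = [\curv^+,-\,]$ and the trace property of $\Res$ converts the leftover boundary terms into the de Rham differential $ud$ of $\phi$. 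The only subtlety here is bookkeeping the derivation $\delta$: one needs that $\delta$ commutes appropriately with $\con^+$ up to inner terms and that $\Res(\delta(\cdot))$-type expressions are handled by the Lemma $\Trh([a,b]) = \Res\lambda(a)\,\delta(\lambda(b))$, which is precisely the mechanism that will feed the $\phi\circ\lambda_*$ defect into the first two identities.

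For the first identity I would compute $\Phi^+\circ(b+uB)$ by the same telescoping. The $b$-differential produces, after using the trace property of $\Trh$ on $\mcW_H$, the covariant-derivative Leibniz rule for $\con^+$, and $(\con^+)^2 = [\curv^+,-\,]$, a sum of two kinds of terms: those that reassemble into $(ud)\circ\Phi^+$ (the ``horizontal'' part, coming from $\con^+(a_ia_{i+1}) = \con^+(a_i)a_{i+1} + a_i\con^+(a_{i+1})$ and the Bianchi identity), and a residual commutator term. Since $\Trh$ is \emph{not} a trace — this is the key structural input from Section~\ref{trace} — the residual term does not vanish; instead, by the Lemma $\Trh([a,b]) = \Res\lambda(a)\,\delta(\lambda(b))$, it equals exactly an expression in $\Res$, $\delta$, and $\con^+$ applied to $\lambda(a_0),\dots,\lambda(a_k)$, which one recognizes as $\phi(\lambda(a_0)\otimes\cdots\otimes\lambda(a_k)) = \phi\circ\lambda_*(a_0\otimes\cdots\otimes a_k)$. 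The $uB$ contribution must be checked to combine correctly with the $b$ contribution (as usual $Bb+bB=0$ at the chain level makes the cross-terms organize), and the normalization $C_l(A) = A\otimes(A/\CC\cdot 1)^{\otimes l}$ ensures the $1$-insertions are consistent.

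The second identity is the same computation performed in $\mcW_H^{op}$ with $\con^-$ and $\curv^- = -\curv^+$. The sign $(-1)^n$ and the appearance of $(\iota\circ\lambda)_*$ rather than $\lambda_*$ come from two sources: first, the involution $\iota$ satisfies $\iota(a\star b) = \iota(b)\star\iota(a)$, so it intertwines the opposite-algebra structure on $\mcW_H^{op}$ with the $\mcB_q$ side through $\iota\circ\lambda$; second, replacing $\curv^+$ by $-\curv^+$ throughout a degree-$2n$ characteristic expression (the residue trace $\Res$ picks out the component homogeneous of degree $-2n$ on the sphere $S^{2n-1}$, per the explicit formula for $\Res$) multiplies it by $(-1)^n$. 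I would track these signs carefully by writing $\curv^- = -\curv^+$ from the start and noting how many curvature factors survive into the residue term.

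\emph{Main obstacle.} The genuinely delicate point — and where I expect to spend the real effort — is the residual-commutator step: showing that the non-traciality defect of $\Trh$, after being rewritten via $\Trh([a,b]) = \Res\lambda(a)\,\delta(\lambda(b))$, matches the map $\phi$ \emph{on the nose}, including all the combinatorial coefficients $(-1)^{i_0+\cdots+i_k+m-1}/(i_0+\cdots+i_k+k)!$ and the placement of the single $\delta$-slot among the $\con^+$-slots. This requires matching the simplex-integral form $\int_{\Delta}\Trh e^{-t_0 u\curv}\con(a_m)\cdots$ of $\Phi^\pm$ against the expansion of $\phi$, keeping track of which boundary face of the simplex $\Delta^{k+1}$ each term in $b$ and $B$ corresponds to, and verifying that $\delta$ — defined by the \emph{non}-$\mcB_q$ element $\log(x^2+\xi^2)$ — behaves like a legitimate (inner-up-to-log) derivation compatible with $\con^+$, so that $\con^+\circ\delta = \delta\circ\con^+$ on $\mcB_H$ (this uses $U(n)$-invariance of $\ho$ and hence of $\log(x^2+\xi^2)$, which is what made $\Trh$ $U(n)$-invariant in the first place). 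Once that identification is pinned down, the rest is the standard, if lengthy, Connes–Gorokhovsky bookkeeping.
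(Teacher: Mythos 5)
Your plan follows essentially the same route as the paper's (very terse) proof: direct computation of the first identity using $(\con^+)^2=[\curv^+,\cdot\,]$, the Bianchi identity, the $U(n)$-invariance of $\Trh$ (which gives $\Trh(\con^+(\cdot))=d\,\Trh(\cdot)$ and $\Trh[\curv^+,\cdot]=0$), and the trace-defect formula $\Trh([a,b])=\Res\lambda(a)\delta(\lambda(b))$ to identify the residual term with $\phi\circ\lambda_*$; then the second identity via the $\iota$-intertwining relations. One organizational difference: the paper does not prove the third identity by a separate direct computation as you propose, but deduces it from the first by composing with $b+uB$, using $(b+uB)^2=0$, $(ud)^2=0$, and the surjectivity of $\lambda_*$ --- a shortcut worth adopting, since it eliminates your entire first paragraph.

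The one point that is actually wrong in your plan is the explanation of the factor $(-1)^{n}$ in the second identity. You attribute it to ``replacing $\curv^+$ by $-\curv^+$ throughout a degree-$2n$ characteristic expression'' and propose to track ``how many curvature factors survive into the residue term.'' This cannot work: the terms of $\phi$ carry arbitrarily many curvature factors $(u\curv^+)^{i_j}$ with $i_0+\cdots+i_k$ unconstrained, so a sign depending on the number of curvature insertions would not be a uniform $(-1)^{n}$. The curvature sign is already absorbed by the intertwining relation $\iota(\curv^-)=\curv^+$ and contributes nothing extra. The actual source of $(-1)^{n}$ is the identity $\Res\circ\iota=(-1)^{n}\Res$: by the explicit formula, $\Res$ only sees the component homogeneous of degree $-2n$ in the fiber variable, and $\iota$ multiplies the degree-$(-2j)$ component by $(-1)^{j}$, hence the degree-$(-2n)$ component by $(-1)^{n}$, independently of how that total homogeneity is distributed among symbols, curvature factors, and the $\delta$-slot. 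With that correction, the rest of your outline (including the commutation $\con^+\circ\delta=\delta\circ\con^+$ from $U(n)$-invariance of $\log(x^2+\xi^2)$) is consistent with what the paper uses.
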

\begin{proof}
These equalities follow from straightforward calculations.

Since the connection is unitary and $\Trh$ is unitarily invariant, we have
\[\Trh(\con(\cdot)) = d \Trh(\cdot)\qquad \Trh [\curv, \cdot]=0.\]
This is used in the first calculation, as well as the equality $\Trh[A, B] = \Res A\delta(B)$.

The second identity follows from the first, using
\[ \con^-(\iota(b)) = \iota (\con^+(b)), \quad \iota(\curv^-)=\curv^+, \quad \delta(\iota(b))=-\iota(\delta(b)),\quad \iota(a)\star_-\iota(b) = \iota(a\star b).\]
The third equality also follows from the first, since $\lambda$ is surjective.

\end{proof}
\begin{theorem}\label{defee}
The map $\Phi\colon \mcC_\bullet(\Ext_H) \to \left(\Omega^\bullet(M)[u], ud \right)$
given by
\begin{equation*}
\Phi(\xi^+,\xi^-, \gamma):=\Phi^+(\xi)-(-1)^n\Phi(\xi^-) - \phi(\gamma)
\end{equation*}
is a morphism of complexes.
\end{theorem}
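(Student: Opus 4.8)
The plan is to reduce the statement entirely to the three identities established in the Proposition just proved; beyond those there is no further analytic or geometric input, only bookkeeping. Concretely, take a chain $(\zeta^+,\zeta^-,\gamma)\in\mcC_\bullet(\Ext_H)$, with $\zeta^\pm\in CC^{per}_\bullet(\mcW_H),\ CC^{per}_\bullet(\mcW_H^{op})$ and $\gamma\in CC^{per}_{\bullet+1}(\mcB_H)$. Unwinding \eqref{defboundary} and the definition of $\Phi$,
\[
\Phi\bigl(\partial(\zeta^+,\zeta^-,\gamma)\bigr)
=\Phi^+\!\bigl((b+uB)\zeta^+\bigr)-(-1)^n\Phi^-\!\bigl((b+uB)\zeta^-\bigr)
-\phi\bigl(\lambda_*\zeta^+-(\iota\circ\lambda)_*\zeta^- -(b+uB)\gamma\bigr).
\]
I would also, as a preliminary, record the routine verification that $\Phi$ has degree zero: in the explicit formulas for $\Phi^\pm$ and $\phi$ each factor $\con^\pm(a_i)$ raises the de Rham degree by $1$ while preserving cyclic length, each factor $\curv^\pm$ raises it by $2$ and comes paired with a power of $u$ (of degree $-2$), and $\phi$ trades one slot of cyclic length for the extra $u$; so a class of total degree $i$ lands in the degree-$i$ part of $\Omega^\bullet(M)[u^{-1},u]$. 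Together with the fact noted after \eqref{cherne} that $\Ch$ lands in $\HH_1(\Ext_H)$, this is what will make $\Phi\circ\Ch$ take values in $H^{odd}(M)$ in the next section.

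Next I would substitute the Proposition's identities $\Phi^+\circ(b+uB)=(ud)\circ\Phi^+ +\phi\circ\lambda_*$, $\Phi^-\circ(b+uB)=(ud)\circ\Phi^- +(-1)^n\phi\circ(\iota\circ\lambda)_*$, and $\phi\circ(b+uB)=(ud)\circ\phi$ into the expression above. The terms carrying $(ud)$ reassemble into $(ud)\Phi^+(\zeta^+)-(-1)^n(ud)\Phi^-(\zeta^-)-(ud)\phi(\gamma)=(ud)\Phi(\zeta^+,\zeta^-,\gamma)$, which is exactly the chain-map identity we want. The remaining terms are the two ``error'' terms $\phi\bigl(\lambda_*(\zeta^+)\bigr)$ and $(-1)^n\cdot(-1)^n\phi\bigl((\iota\circ\lambda)_*(\zeta^-)\bigr)=\phi\bigl((\iota\circ\lambda)_*(\zeta^-)\bigr)$ produced by the first two substitutions, together with the terms $-\phi\bigl(\lambda_*(\zeta^+)\bigr)$ and $+\phi\bigl((\iota\circ\lambda)_*(\zeta^-)\bigr)$ obtained by expanding $\phi$ on the third slot of $\partial(\zeta^+,\zeta^-,\gamma)$. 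These cancel in pairs, and the theorem follows.

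The one place where genuine care is needed is the sign bookkeeping: the Koszul signs implicit in \eqref{defboundary}, the degree shift built into $\phi\colon CC_\bullet(\mcB_H)\to\Omega^{\bullet-1}(M)[u]$ (so that $\phi$ applied to the $CC^{per}_{\bullet+1}$-slot is effectively degree zero), and above all the factor $(-1)^n$ in the definition of $\Phi$. That factor is forced: it is precisely the sign needed so that the error term $(-1)^n\phi\circ(\iota\circ\lambda)_*$ coming out of the $\Phi^-$ identity is annihilated by — rather than added to — the contribution of the third component. I would pin down all conventions by first checking $\partial^2=0$ on the nose (using that $\lambda_*$ and $(\iota\circ\lambda)_*$ are chain maps for $b+uB$ and that $(b+uB)^2=0$) and the compatible sign in the target, after which the pairwise cancellation in the previous paragraph is literal and no residual term survives.
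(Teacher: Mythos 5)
Your proposal is correct and is essentially the paper's own proof, which simply declares the theorem an immediate consequence of the three identities in the preceding proposition together with the definition of the boundary \eqref{defboundary}; you have merely written out the substitution and the pairwise cancellation of the $\phi\circ\lambda_*$ and $\phi\circ(\iota\circ\lambda)_*$ terms explicitly. One small point: for the $\gamma$-terms to reassemble into $-(ud)\phi(\gamma)$ as you claim, the third identity must be used in the anticommutation form $\phi\circ(b+uB)=-(ud)\circ\phi$ appropriate to the degree $-1$ map $\phi$ (which is what actually follows from applying $b+uB$ to the first identity and using surjectivity of $\lambda$), exactly the degree-shift sign you flag as the delicate point.
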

\begin{proof}
This is an immediate consequence of the identities in the preceding proposition, as well as the definition of the boundary \eqref{defboundary} in the complex  $\mcC_\bullet(\Ext_H)$
\end{proof}
\begin{definition}\label{def:character_map}
We define the characteristic map
\begin{equation*}
\chi \colon K_1(\mcE_H) \to H^{odd}(M)
\end{equation*}
as the composition
\begin{equation*}
  K_1(\mcE_H) \overset{\Ch}{\longrightarrow} \HH_1(\mcE_H) \overset{\Phi}{\longrightarrow} H^{odd}(M)[u^{-1}, u] \overset{R}{ \longrightarrow} H^{odd}(M).
\end{equation*}
Here the first map is given in equation \eqref{cherne}, the second is gven in Theorem \ref{defee}, and
$R(\sum u^q \alpha_{q}) = \sum (2 \pi i)^{-q}\alpha_{q}$, $\alpha_{q} \in H^\bullet(M)$.
\end{definition}

\section{The index formula}\label{formula section}
Let $\msL \in \Psi^{0,0}_{eH}$ be an operator in the extended calculus with invertible principal symbol
\[\sigma_{eH}(\msL) \in \Symb_{eH}^{0,0}:=\Psi^{0,0}_{eH}/\Psi^{-1,-1}_{eH}.\]

\begin{theorem}\label{main_theorem}
Let $M$ be a closed smooth manifold of dimension $2n+1$ with contact form $\alpha$.
We orient  $M$ by the volume form $\alpha(d\alpha)^n$.
Let
\begin{equation*}\msL \colon C^\infty(M,\CC^r)\to C^\infty(M,\CC^r)\end{equation*}
be a  pseudodifferential operator of  order zero in the extended Heisenberg calculus
that acts on sections in a trivial bundle $M\times \CC^r$. Assume that
 $\sigma_{eH}(\msL)\in M_r(\Symb_{eH}^{0,0})$
 %its  extended Heisenberg principal  symbol
 is invertible. Then the index of $\msL$ is
\begin{equation*} \ind  \msL = \int_M \chi( s(\sigma_{eH}(\msL)))\wedge \hat{A}(M).\end{equation*}
Here $s:\Symb^{0,0}_{eH}\to \Ext_H$ is the canonical section, and $\chi:K_1(\Ext_H)\to H^{odd}(M)$ is the character homomorphism of Definition \ref{def:character_map}.
\end{theorem}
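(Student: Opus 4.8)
The plan is to realize $\ind\msL$ as a pairing in cyclic theory, namely between the class $\Ch(s(\sigma_{eH}(\msL)))\in\HH_1(\Ext_H)$ of \eqref{cherne} and a cocycle assembled from the regularized traces of Section \ref{trace}, and then to evaluate that pairing by a local computation. First, both sides of the asserted equality depend only on $[\sigma_{eH}(\msL)]\in K_1(\Symb_{eH}^{0,0})$: the left by homotopy invariance of the index, the right because the composite $\chi\circ s$ is a group homomorphism out of $K$-theory (the section $s$ sends invertibles to invertibles compatibly with homotopy, as observed in Section \ref{extension}, and $\chi$ is the homomorphism of Definition \ref{def:character_map}). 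It therefore suffices to compare the two resulting homomorphisms $K_1(\Symb_{eH}^{0,0})\to\ZZ$. Abstractly $\ind$ is the composite of the connecting map $\partial\colon K_1(\Symb_{eH}^{0,0})\to K_0(\Psi_{eH}^{-1,-1})$ of the principal-symbol extension with the canonical isomorphism $K_0(\Psi_{eH}^{-1,-1})\cong\ZZ$.

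To turn this into a formula, I would lift $\sigma_{eH}(\msL)$ by a parametrix to a full symbol invertible modulo smoothing, and express the index via the regularized traces of Section \ref{trace}. Because $\Psi_{eH}^{-1,-1}$ does not lie in the trace ideal, the functional that computes the index is not a trace on $\Symb_{eH}^{0,0}$; instead one uses the regularized trace $\Trh$ on the Weyl-algebra bundles $\mcW_H$, $\mcW_H^{op}$, whose failure to be a trace is controlled by the identity $\Trh[a,b]=\Res\lambda(a)\,\delta(\lambda(b))$, together with the residue trace $\Res$ on $\mcB_H$. These three functionals are exactly the ingredients of the maps $\Phi^{\pm}$ and $\phi$, and the compatibilities among them recorded in the Proposition preceding Theorem \ref{defee} are precisely what makes them fit together into a single cocycle $\tau$ on the complex dual to $\mcC_\bullet(\Ext_H)$. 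The outcome of this step is the identity $\ind\msL=\langle\Ch(s(\sigma_{eH}(\msL))),\tau\rangle$.

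The heart of the proof is then the local identity $\langle\Ch(c),\tau\rangle=\int_M R\big(\Phi(c)\big)\wedge\hat A(M)$ for every cycle $c\in\mcC_\bullet(\Ext_H)$; equivalently, $\tau$ is cohomologous, in $H^\bullet$ of the dual complex, to $c\mapsto\int_M R(\Phi(c))\wedge\hat A(M)$, which is a cocycle because $\Phi$ is a morphism of complexes (Theorem \ref{defee}) and $\hat A(M)$ is closed. To prove it I would compute $\Trh$ of the Gaussian-type elements that appear in $\Phi^{\pm}$: these are exponentials $e^{-tu\curv^{\pm}}$ of the curvature $\curv^{+}=\iso^{-1}(\theta)$ (and $\curv^{-}=-\curv^{+}$), with $\theta$ the $\mathfrak{u}(n)$-curvature of the chosen unitary connection on $H$. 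Weyl quantization and the Mehler formula for $e^{-t\ho}$ evaluate $\Trh$ on such elements explicitly, and --- exactly as in the algebraic index theorem of Fedosov and Nest--Tsygan, and as in \cite{GvE22} --- the result is the $\hat A$-density of the curvature. The $\phi$-contribution, built from $\Res$ and the derivation $\delta$ of \eqref{defdelta}, accounts for the equatorial face $S(H^*)$ blown up in the extended calculus and for the interpolating path $r_t$; it is computed using $\Trh[a,b]=\Res\lambda(a)\,\delta(\lambda(b))$ and the linearity of $r_t=r_0+tr_1$, and supplies the correction that lets the three terms of $\mcC_\bullet(\Ext_H)$ assemble into the single global form $\hat A(M)$. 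Keeping track of the normalizations --- the factor $-\tfrac1{2\pi i}$ and the factorials in \eqref{cyclicchernformula} and in $\Tch$, and the rescaling $R$ of Definition \ref{def:character_map} --- completes the identification. As a consistency check, restricting to symbols in the image of $\Symb_H^0\hookrightarrow\Symb_{eH}^{0,0}$ should recover the index formula of \cite{GvE22}.

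The main obstacle is twofold. First, in the middle step: because $\Psi_{eH}^{-1,-1}$ is not contained in the trace ideal, the index is not literally a trace of a commutator on $\Symb_{eH}^{0,0}$, and producing an honest cyclic cocycle requires the passage to full symbols modulo smoothing and a careful bookkeeping, respecting the three-term structure of $\Ext_H$, that matches the cocycle $\tau$ to the one computed by $\Phi$. Second, the local computation itself: identifying the regularized Weyl-bundle trace paired against curvature with the $\hat A$-form via the Mehler formula, and pinning down every constant --- in particular, checking that the $\phi$-term contributes exactly the interpolation between the Heisenberg face and the classical face, so that no $S(H^*)$-boundary terms survive. Finally, I note that although the extended calculus contains the classical pseudodifferential operators, this argument does not invoke the Atiyah--Singer theorem; recovering the latter in its customary $\Td$-form from the present $\hat A$-form is a separate cohomological computation, consistent with the remark in the introduction.
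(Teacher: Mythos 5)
Your proposal does not follow the paper's proof, and as written it has a genuine gap: the two steps you yourself flag as ``the main obstacle'' --- building an index cocycle $\tau$ on (the dual of) $\mcC_\bullet(\Ext_H)$ out of $\Trh$, $\Res$ and $\delta$, and then evaluating the pairing by a Mehler-type computation to produce $\hat{A}(M)$ --- are precisely the content of the theorem, and they are left as a plan rather than carried out. In particular you never explain how the Fredholm index of $\msL$, which a priori lives in the operator algebra $\Psi^{0,0}_{eH}$, gets identified with a pairing against a cocycle defined purely on the symbol-level complex $\mcC_\bullet(\Ext_H)$; for the extended calculus this is not supplied by anything in the earlier sections, because of the third factor $C^\infty(S(H^*)\times[0,1])$ interpolating between the two Weyl faces, and no construction of $\tau$ or proof of its cocycle property is given.

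The paper's actual proof sidesteps all of this. After the (correct) reduction to comparing two homomorphisms $K_1(\Symb_{eH}^{0,0})\to\CC$, it observes that the inclusion $\Symb_H^0\hookrightarrow\Symb_{eH}^{0,0}$ induces an isomorphism on $K$-theory: both algebras are extensions by the same ideal $\Sch(H^*,\omega)\oplus\Sch(H^*,-\omega)$, and the induced map of commutative quotients $C^\infty(S(H^*))\to C^\infty(S(H^*)\times[0,1])$ is a homotopy equivalence, so the six-term exact sequences identify the two $K_1$ groups. Hence every class in $K_1(\Symb_{eH}^{0,0})$ is represented by a Heisenberg symbol, and the formula reduces to Theorem 9.1 of \cite{GvE22}, which is quoted rather than reproved. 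What you relegate to a final ``consistency check'' is in fact the entire argument. To salvage your route you would essentially have to redo the analysis of \cite{GvE22} in the extended setting; the efficient fix is to promote your last observation to the main step and verify the $K$-theory isomorphism via the commuting diagram of symbol extensions.
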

\begin{proof}
Both $\ind  \msL$ and
$\int_M \chi( s(\sigma_{eH}(\msL)))\wedge \hat{A}(M)$ depend only on the class
$[\sigma_{eH}(\msL)]\in K_1(\Symb_{eH}^{0,0})$.
Thus we need to verify that the two resulting group homomorphisms
$K_1(\Symb_{eH}^{0,0}) \to \mathbb{C}$ coincide.

The embedding $\Symb_{H}^0 \to \Symb_{eH}^{0,0}$ induces an isomorphism in $K$-theory.
This can be seen by considering the diagram in $K$-theory that corresponds to the following  commutative diagram,
\[\xymatrix{
0\ar[r] &\Sch(H^*,\omega)\oplus\Sch(H^*,-\omega)\ar[r]\ar[d]^{=} &\Symb_H^0\ar[r]\ar[d]^{\subset} &C^\infty(S(H^*))\ar[r]\ar[d] &0\\
0\ar[r] &\Sch(H^*,\omega)\oplus\Sch(H^*,-\omega)\ar[r] &\Symb_{eH}^{0,0}\ar[r] &C^\infty(S(H^*)\times [0,1])\ar[r] &0
}
\]
Here $\Sch(H^*,\omega)$ is the algebra of smooth sections in the bundle whose fibers are the Schwartz class ideals $\Sch(H^*_p,\omega^*_p)$, and $S(H^*)$ is the unit sphere bundle of $H^*$. The map $C^\infty(S(H^*))\to C^\infty(S(H^*)\times [0,1])$ is determined by the homotopy equivalence $S(H^*)\times [0,1]\to S(H^*)$.
The rows in the diagram are short exact sequences.

We see that it  suffices to verify that the homomorphism
\[K_1(\Symb_{eH}^{0,0})\to \CC\qquad \sigma\mapsto \int_M \chi(s(\sigma))\wedge\hat{A}(M)\]
computes the index of  operators in the Heisenberg calculus with invertible principal symbol in $\Symb_H^0\subset \Symb_{eH}^{0,0}$. But in this case our formula reduces to Theorem 9.1 of \cite{GvE22}.
\end{proof}

%\bibliographystyle{amsplain}

%\bibliography{MyBibfile}
\begin{bibdiv}
\begin{biblist}

\bib{BG88}{book}{
      author={Beals, Richard},
      author={Greiner, Peter},
       title={Calculus on {H}eisenberg manifolds},
      series={Annals of Mathematics Studies},
   publisher={Princeton University Press},
     address={Princeton, NJ},
        date={1988},
      volume={119},
}

\bib{Co94}{book}{
      author={Connes, Alain},
       title={Noncommutative geometry},
   publisher={Academic Press Inc.},
     address={San Diego, CA},
        date={1994},
        ISBN={0-12-185860-X},
}

\bib{EMxx}{book}{
      author={Epstein, Charles},
      author={Melrose, Richard},
       title={The {H}eisenberg algebra, index theory and homology},
        note={Unpublished manuscript, available at
  \url{https://math.mit.edu/~rbm/book.html}.},
}

\bib{Ep04}{incollection}{
      author={Epstein, Charles~L.},
       title={Lectures on indices and relative indices on contact and
  {CR}-manifolds},
        date={2004},
   booktitle={Woods {H}ole mathematics},
      series={Ser. Knots Everything},
      volume={34},
   publisher={World Sci. Publ., Hackensack, NJ},
       pages={27\ndash 93},
  url={https://doi-org.dartmouth.idm.oclc.org/10.1142/9789812701398_0002},
      review={\MR{2123367}},
}

\bib{gor1}{article}{
      author={Gorokhovsky, Alexander},
       title={Characters of cycles, equivariant characteristic classes and
  {F}redholm modules},
        date={1999},
        ISSN={0010-3616},
     journal={Comm. Math. Phys.},
      volume={208},
      number={1},
       pages={1\ndash 23},
         url={https://doi.org/10.1007/s002200050745},
      review={\MR{1729875}},
}

\bib{gor2}{book}{
      author={Gorokhovsky, Alexander},
       title={Explicit formulae for characteristic classes in noncummutative
  geometry},
        date={1999},
        ISBN={978-0599-42874-4},
        note={Thesis (Ph.D.)--The Ohio State University},
      review={\MR{2699561}},
}

\bib{GvE22}{article}{
      author={Gorokhovsky, Alexander},
      author={van Erp, Erik},
       title={The {H}eisenberg calculus, index theory and cyclic cohomology},
        date={2022},
        ISSN={0001-8708},
     journal={Adv. Math.},
      volume={399},
       pages={Paper No. 108229},
  url={https://doi-org.dartmouth.idm.oclc.org/10.1016/j.aim.2022.108229},
      review={\MR{4383012}},
}

\bib{loday}{book}{
      author={Loday, Jean-Louis},
       title={Cyclic homology},
      series={Grundlehren der Mathematischen Wissenschaften [Fundamental
  Principles of Mathematical Sciences]},
   publisher={Springer-Verlag, Berlin},
        date={1992},
      volume={301},
        ISBN={3-540-53339-7},
  url={https://doi-org.dartmouth.idm.oclc.org/10.1007/978-3-662-21739-9},
        note={Appendix E by Mar\'{\i}a O. Ronco},
      review={\MR{1217970}},
}

\bib{Me97}{incollection}{
      author={Melrose, Richard},
       title={Homology and the {H}eisenberg algebra},
        date={1997},
   booktitle={S\'eminaire sur les \'{E}quations aux {D}\'eriv\'ees
  {P}artielles, 1996--1997},
   publisher={\'Ecole Polytech.},
     address={Palaiseau},
       pages={Exp.\ No.\ XII, 11},
        note={Joint work with C. Epstein and G. Mendoza},
}

\bib{Ta84}{article}{
      author={Taylor, Michael~E.},
       title={Noncommutative microlocal analysis. {I}},
        date={1984},
        ISSN={0065-9266},
     journal={Mem. Amer. Math. Soc.},
      volume={52},
      number={313},
       pages={iv+182},
}

\end{biblist}
\end{bibdiv}

\end{document}